\newtheorem{thm}{Theorem}[section]
\newtheorem{cor}[thm]{Corollary}
\newtheorem{prop}[thm]{Proposition}
\newtheorem{mprop}[thm]{Main Proposition}
\newtheorem{rem}[thm]{Remark}
\theoremstyle{definition}
\newtheorem{definition}[thm]{Definition}
\theoremstyle{remark}
\begin{document}

\title[{\normalsize {\large {\normalsize {\Large {\LARGE }}}}}  counter examples to the nonrestricted representation theory
]{ Counter examples to the nonrestricted representation theory}

\author{Kim YangGon }

\address{emeritus professor,
	 Department of Mathematics,  
	Jeonbuk National University, Korea.}

\

\

\email{ kyk1.chonbuk@hanmail.net }

\subjclass[2010]{Primary00-02,secondary17B50,17B10}

\begin{abstract}
We shall consider nonrestricted representations of $C_l-$ type Lie algebra over an algebraically closed field of characteristic  $p\geq7.$
 This paper gives some counter examples to important theory relating to the representations of modular Lie algebras.

\end{abstract}

\maketitle

\section{introduction}

\

\

\large{We shall study the $C_l-$ type Lie algebra in section 2. In section 3, restricted and nonrestricted representations are explained.
Next we exhibit irreducible modules for nonrestricted representations of modular $C_l-$ type  Lie algebra in the last section 4.\newline

This paper just gives some counter examples to [3] which states sort of the dimensions of  irreducible modules over modular $C_l-$ type Lie algebras.

\

\section{ Modular $C_l-$ type Lie alebra }

\

\

As is well known the symplectic Lie algebra of rank $l$, i.e., the $C_l-$ type Lie algebra over $\mathbb{C}$  has its root system $\Phi $= ${\{ \pm2\epsilon_i, \pm(\epsilon_i
\pm\epsilon_j),i\neq j}\}$, where $\epsilon_i,$ $\epsilon_j$ are linearly independent unit vectors in  $\mathbb{R}^l$ with $l\geq3.$ \newline

For an algebraically closed field $F$ of prime characteristic $p,$ the $C_l-$ type Lie algebra $L$ over $F$ is just the analogue over $F$ of the $C_l-$ type 
simple Lie algebra  over $\mathbb{C.}$\newline

 In other words the $C_l-$ type Lie algebra over $F$ is isomorphic to the Chevalley Lie algebra of the form 
$\sum_{i=1}^{n}\mathbb{Z}c_i\otimes_\mathbb{Z}F,$ where $ n$= $dim_FL$ and $x_\alpha$= some $c_i$ for each $\alpha\in\Phi$ , $h_{\alpha}$= some $c_{j}$ with  $\alpha$  some base element  of  $\Phi$  for a Chevalley basis 
\{$c_{i}$\} of  the $C_{l}$ - type  Lie algebra over $\mathbb{C}$ .

\

\

\section{restricted and nonrestricted representations}

\

\

We assume in this section that the ground field $F$ is an algebraically closed field of nonzero characteristic $p.$

\begin{definition}\label{thm3.1}
Let  $(L , [p])$ be a restricted Lie algebra over $F$  and $\chi \in L^{*}$ be a linear form. If a representation
 $\rho_\chi:$$L\longrightarrow$ $ \mathfrak{gl}(V)$   of  $(L, [p])$ satisfies  $\rho_\chi( x^p- x^{[p]})$= $\chi(x)^p id_V$ 
for any $x\in L,$ then $\rho_\chi$  is said  to be a  $\chi- representation.$\newline

 In  this case we say that the representation or the 
corresponding module has a      $p-character  \chi.$ In particular if $\chi$=0 , then $\rho_0$ is  called  a $ restricted$  representation, whereas  $\rho_\chi$ 
for $\chi \neq0$  is called a $nonrestricted$ representation .
\end{definition}

We are well aware that  we have  $\rho_\chi(a)^p -\rho_\chi ( a^{[p]})$=$\chi(a)^p id_V$ for some $\chi\in L^*$,  for any  $a\in L$  and for any irreducible  representation $\rho_\chi.$

\

\section{irreducible nonrestricted representatons of  $C_l-$type Lie algebra }

\

\

In this section we compute the dimension of  some irreducible modules  of  the $C_l-$ type Lie algebra $L$ with a CSA $ H$ over an algebraically closed field $F$ 
of characteristic $p\geq7.$

\begin{prop}\label {thm4.1}

Let  $\alpha$  be any  root  in the root system  $\Phi$ of $L .$ If $\chi(x_\alpha)$ $\neq0,$ then $dim_F$$ \rho_\chi$$(U(L))$ = $p^{2m},$ where $ [Q(U(L)):Q(\mathfrak{Z})]$=$p^{2m}$=$p^{n-l}$ with $\mathfrak{Z}$ the center of $U(L)$  and  $Q$  denotes  the quotient algebra. So the irreducible module corresponding to this representation has $p^m$ as its  dimension.

\end{prop}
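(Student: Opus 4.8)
The plan is to identify $\dim_F\rho_\chi(U(L))$ with the square of the dimension of the irreducible module $V$, and then to read off $\dim_F V$ from the degree $[Q(U(L)):Q(\mathfrak{Z})]$ through the general theory of prime polynomial-identity rings. First I would record the structural facts. By the PBW theorem $U(L)$ is a Noetherian domain. Writing $\mathfrak{Z}_p$ for the $p$-center generated by the central elements $x^{p}-x^{[p]}$ $(x\in L)$, the algebra $U(L)$ is a free $\mathfrak{Z}_p$-module of rank $p^{n}$ with $n=\dim_F L$, and $\mathfrak{Z}_p$ is a polynomial ring in $n$ variables; hence $U(L)$ is finite over $\mathfrak{Z}_p\subseteq\mathfrak{Z}$ and is a prime PI algebra. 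By Posner's theorem its quotient algebra $Q(U(L))$ is a central division algebra over the field $Q(\mathfrak{Z})$, and $[Q(U(L)):Q(\mathfrak{Z})]$ equals the square of the PI-degree of $U(L)$; I denote this common value by $p^{2m}$, so that $p^{m}$ is the generic, and maximal, $F$-dimension of an irreducible $U(L)$-module.

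Since $F$ is algebraically closed and $\rho_\chi$ is irreducible, Burnside's theorem (Jacobson density) gives $\rho_\chi(U(L))=\mathrm{End}_F(V)\cong M_{\dim_F V}(F)$, whence $\dim_F\rho_\chi(U(L))=(\dim_F V)^{2}$. Thus the proposition reduces to two assertions: (a) $[Q(U(L)):Q(\mathfrak{Z})]=p^{n-l}$, so that $2m=n-l$, and (b) for the given $\chi$ the value $\dim_F V=p^{m}$ is attained, i.e. $\rho_\chi$ maps onto the full matrix algebra.

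For (a) I would exploit the tower of fields $Q(\mathfrak{Z}_p)\subseteq Q(\mathfrak{Z})\subseteq Q(U(L))$. Freeness of rank $p^{n}$ gives $[Q(U(L)):Q(\mathfrak{Z}_p)]=p^{n}$. By Veldkamp's description of the center in characteristic $p$, $\mathfrak{Z}$ is generated by $\mathfrak{Z}_p$ together with the Harish--Chandra center $\mathfrak{Z}_{HC}=U(L)^{L}$, and $\mathfrak{Z}$ is a finite $\mathfrak{Z}_p$-module of generic rank $p^{l}$, so $[Q(\mathfrak{Z}):Q(\mathfrak{Z}_p)]=p^{l}$. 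Multiplicativity of degrees then yields $[Q(U(L)):Q(\mathfrak{Z})]=p^{n}/p^{l}=p^{n-l}=p^{2m}$; for the $C_l$-type $L$ one has $n=2l^{2}+l$, so $m=l^{2}$, which is exactly the number of positive roots. For (b) I would argue that the hypothesis $\chi(x_\alpha)\neq0$ places the corresponding maximal ideal $\mathfrak{m}$ of $\mathfrak{Z}$ in the Azumaya locus of $U(L)$ over $\mathfrak{Z}$; there $U(L)/\mathfrak{m}U(L)\cong M_{p^{m}}(F)$, this quotient is precisely $\rho_\chi(U(L))$, and its unique simple module has dimension $p^{m}$. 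Equivalently this is the Kac--Weisfeiler bound $p^{(\dim_F L-\dim_F L^{\chi})/2}$ with $\dim_F L^{\chi}=l$.

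The main obstacle is exactly step (b): showing that the single condition $\chi(x_\alpha)\neq0$ forces the centralizer dimension $\dim_F L^{\chi}=l$ and lands $\chi$ in the Azumaya locus. A single nonzero root-vector coordinate of $\chi$ does not in general make $\chi$ regular, so sufficiency would have to be verified by an explicit centralizer computation for the coadjoint action on $\mathfrak{sp}_{2l}$. I expect this to be the delicate point, and it is precisely where the counter-examples of this paper are likely to intervene.
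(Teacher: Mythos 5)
Your structural reduction is, as far as it goes, correct, and it is a genuinely different route from the paper's: you invoke Burnside/Jacobson density to get $\dim_F\rho_\chi(U(L))=(\dim_F V)^2$, then Posner and Veldkamp to get $[Q(U(L)):Q(\mathfrak{Z})]=p^{n-l}=p^{2l^2}$, so that everything hinges on showing $\chi$ lies in the Azumaya locus. The paper never mentions the center, the PI degree, or Veldkamp; it works directly inside $U(L)/\mathfrak{M}_\chi$ (with $\mathfrak{M}_\chi$ the kernel of the irreducible representation) and tries to exhibit $p^{2m}$ linearly independent monomials $(B_1+A_{\beta_1})^{i_1}\cdots(B_{2m}+A_{\beta_{2m}})^{i_{2m}}$, $0\le i_j\le p-1$, built from modified root vectors $A_\beta$ chosen to commute with $x_\alpha$, arguing independence by taking a minimal dependence relation and applying $\mathrm{ad}\,x_\alpha$ repeatedly (and it never addresses spanning at all).

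The step (b) you flag is not merely delicate: it is false, so no centralizer computation can close it, and this is the decisive point. Identify $\chi$ with an element of $L$ via the trace form $\kappa$ (nondegenerate since $p$ is odd). Taking $y=x_{-2\epsilon_1}+x_{-2\epsilon_2}$ in $\mathfrak{sp}_4$, the functional $\chi=\kappa(y,\cdot)$ satisfies $\chi(x_{2\epsilon_1})\neq0$, yet $y$ is a subregular nilpotent element (Jordan type $[2,2]$, orbit dimension $6$), so $\dim_F L^{\chi}=4>2=l$: $\chi$ is not regular, hence not in the Azumaya locus, and Jantzen's computations reported in Humphreys' survey (the paper's reference [3]) give simple $U_\chi(\mathfrak{sp}_4)$-modules of dimension $p^{3}$, not $p^{4}$. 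Likewise $y=x_{-2\epsilon_1}$ alone yields a minimal nilpotent $\chi$ with $\dim_F L^{\chi}=2l^2-l>l$ for every $l\ge2$. So the hypothesis $\chi(x_\alpha)\neq0$ simply does not imply $\dim_F L^{\chi}=l$, the proposition itself is false for $l\ge2$, and the paper's Remark disputing the $p^3$ example is exactly where its error surfaces; its own ad-hoc independence argument cannot be valid, since the set it claims to be a basis has more elements than $\dim_F U(L)/\mathfrak{M}_\chi=(\dim_F V)^2$ in the subregular case. Your write-up has the virtue of isolating precisely the unprovable implication ($\chi(x_\alpha)\neq0\Rightarrow\chi$ regular) on which everything depends.
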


\begin{proof}

Let $ \mathfrak {M}_\chi$ be the kernel of this irreducible representation,i.e., a certain (2-sided) maximal ideal of $U(L).$ \newline

(I) Assume first that $\alpha$ is a  short root; then we may put $\alpha$=$\epsilon_1-\epsilon_2$  without loss of generaity since all roots of a given length 
are conjugate under the Weyl group of the root system $\Phi$.\newline

 First we let  $B_i$=$b_{i1}$ $h_{\epsilon_{1}- \epsilon_{ 2}}$+ $b_{i2}$ $h_{\epsilon_{2}-\epsilon_{3}}$+$\cdots$+$b_{i,l-1}$ $h_{\epsilon_{l-1}-\epsilon_{ l }}$ + $b_{il}$ $h_{\epsilon_{l}}$  for $i=  1,2,$ $\cdots,2m,$ where ($b_{i1}$,$b_{i2}$   $\cdots,$$b_{il}$) $\in F^{l}$ are chosen so that any $ (l+1)-$$ B_i$'s  are linearly independent in $\mathbb{P}^l(F),$ the  $B$   below  becomes an $F-$ linearly independent  set in $U(L)$ if necessary and $x_\alpha$$B_i$ $\not\equiv$$B_i$$x_\alpha$ for $\alpha$=$\epsilon_1-$$\epsilon_2.$ \newline

In  $U(L)$/$\mathfrak{M}_\chi$ we claim that we have a basis 
$B$:= 
$\{(B_1 +A_{\epsilon_{1}-\epsilon_ { 2}})^{i_1}\otimes(B_2 +A_{-(\epsilon_{1}-\epsilon_ {2})})^{i_2}\otimes \cdots \otimes(B_{2l-2} +A_{-(\epsilon_{l-1}-\epsilon_{l})})^{i_{2l-2}}\otimes(B_{2l-1}+ A_{2\epsilon_{l}})^{i_{2l-1}}\otimes(B_{2l}+A_{-2\epsilon_{l}})^{i_{2l}}\otimes(\otimes_{j=2l+1}^{2m}(B_j+A_{\alpha_{j}})^{i_{j}}) ; 0 \leq i_{j}\leq p-1 \}$, \newline
where we put\newline
$A_{\epsilon_{1}-\epsilon_{2}}$= $x_\alpha$=
$x_{\epsilon_{1}-\epsilon_{2}},$ \newline
$A_{\epsilon_{2}-\epsilon_{1}}$=$c_{-(\epsilon_{1}-\epsilon_{2})}$ +$(h_{\epsilon_{1}-\epsilon_{2}}+1)^2$ +4$x_\alpha$ $x_{-\alpha},$ \newline
$A_{{\epsilon_{2}}{\pm}\epsilon_{3}}$=$x_{\pm2\epsilon_{3}}$ $(c_{\epsilon_{2}\pm\epsilon_{3}}+ x_{\epsilon_{2}\pm\epsilon_{3}}x_{-(\epsilon_{2}\pm\epsilon_{3})}\pm x_{\epsilon_{1}\pm\epsilon_{3}}x_{-(\epsilon_{1}\pm\epsilon_{3})}),$ \newline
$A_{\epsilon_{1}+\epsilon_{2}}$=$x_{\epsilon_{1}-\epsilon_{2}}^2$ $(c_{\epsilon_{1}+\epsilon_{2}}+3x_{\epsilon_{1}+\epsilon_{2}}x_{-\epsilon_{1}-\epsilon_{2}}\pm 2x_{2\epsilon_{1}}x_{-2\epsilon_{1}}\pm 2 x_{2\epsilon_{2}}x_{-2\epsilon_{2}}),$ $A_{\epsilon_{2}\pm\epsilon_{k}}$=$x_{\epsilon_{3}\pm\epsilon_{k}}( c_{\epsilon_{2}\pm\epsilon_{k}}+x_{\epsilon_{2}\pm\epsilon_{k}}x_{-(\epsilon_{2}\pm\epsilon_{k})} \pm x_{\epsilon_{1}\pm\epsilon_{k}}x_{-(\epsilon_{1}\pm\epsilon_{k})} ),$ \newline
$A_{2\epsilon_{2}}$=$x_{2\epsilon_{3}}^2 (c_{2\epsilon_{2}}+2x_{2\epsilon_{2}}x_{-2\epsilon_{2}}\pm 3x_{\epsilon_{1}+ \epsilon_{2}}x_{-\epsilon_{1}-\epsilon_{2}} +2x_{2\epsilon_{1}}x_{-2\epsilon_{1}}),$ \newline
$A_{-2\epsilon_{1}}$=
$x_{-2\epsilon_{3}}^2 ( c_{-2\epsilon_{1}}+ 2x_{-2\epsilon_{1}}x_{2\epsilon_{1}}\pm3x_{-\epsilon_ {1}-\epsilon_{2}}x_{\epsilon_{1}+\epsilon_{2}}
\pm2x_{-2\epsilon_{2}}x_{2\epsilon_{2}}), $ $A_{-(\epsilon_{1}\pm\epsilon_{3})}$= $x_{-(\pm\epsilon_{3})}(c_{-(\epsilon_{2}\pm\epsilon_{3})}
+  x_{\epsilon_{2}\pm\epsilon_{3}}x_{-(\epsilon_{2}\pm\epsilon_{3})}\pm x_{\epsilon_{1}\pm\epsilon_{3}}x_{-(\epsilon_{1}\pm\epsilon_{3})}),$
$A_{-(\epsilon_{1}\pm\epsilon_{k})}$= $x_{-(\epsilon_{3}\pm\epsilon_{k})}(c_{-(\epsilon_{1}\pm\epsilon_{k})}+ x_{\epsilon_{2}\pm\epsilon_{k}}x_{-(\epsilon_{2}\pm\epsilon_{k})}\pm x_{\epsilon_{1}\pm\epsilon_{k}}x_{-(\epsilon_{1}\pm\epsilon_{k})} ),$ $A_{2\epsilon_{l}}$= $x_{2\epsilon_{l}}^2, $ \newline
$A_{-2\epsilon_{l}}$= $x_{-2\epsilon_{l}}^2, $ 
\newline

with the sign chosen so that they commute with $x_{\alpha}$ and with $c_{\alpha}\in F$ 
chosen so that $A_{\epsilon_{2}-\epsilon_{1}}$ and parentheses are invertible.  For any other root $ \beta$ we put $A_{\beta}$= $x_{\beta}^2 $ or $x_{\beta}^3 $ if possible. Otherwise attach to these sorts the parentheses(        ) used for designating $A_{-\beta}$ so that  $A_\gamma  \forall \gamma \in \Phi$ may commute with $x_\alpha$.\newline

We shall prove that $B$ is a basis in $U(L))$/$\mathfrak {M}_\chi$. By virtue of P-B-W theorem, it is not difficult to see that $B$ is evidently a linearly independent set  over $F$ in $U(L)$. Furthermore $\forall$ $\beta$ $ \in\Phi$, $A_{\beta}\notin\mathfrak {M}_\chi$(see detailed proof below).\newline

We shall prove that a nontrivial linearly dependence equation leads to absurdity.We assume first that there is a dependence equation which is of least degree with respect to $h_{\alpha_{j}}\in H$ and the number of whose highest degree terms is also least. \newline

In case it is conjugated by $x_{\alpha}$,then there arises a nontrivial dependence equation of lower degree than the given one, which contradicts our assumption.
  Otherwise it reduces to one of the following forms:\newline

(i) $x_{2\epsilon_{j}}$$K$ + $K'$ $\in$ $\mathfrak{M}_\chi$ ,

(ii) $x_{-2\epsilon_{j}}$$K$+ $K'$ $\in$ $\mathfrak{M}_\chi$ ,

(iii)$x_{\epsilon_{j}+\epsilon_{k}}$ $K$ + $K'$$\in$ $\mathfrak{M}_\chi$,

(iv)$x_{-\epsilon{j}-\epsilon_{k}}$$K$ + $K'$ $\in$ $\mathfrak{M}_\chi$,

(v)$x_{\epsilon_{j}-\epsilon_{k}}$$K$ + $K'$ $\in$ $\mathfrak{M}_\chi$ ,

where $K$, $K'$ commute with $x_{\alpha}$.\newline

For the case (i), we deduce successively 
$x_{\epsilon_{2}-\epsilon_{j}} x_{2\epsilon{j}}$$K$ + $x_{\epsilon_{2}-\epsilon{j}}$$K'$ $\in$ $\mathfrak{M}_\chi$

$\Rightarrow$ $x_{\epsilon_{2}+\epsilon_{j}}$$K$ + $x_{2\epsilon_{j}}$ $x_{\epsilon_{2}-\epsilon_{j}}$$K$ + $x_{\epsilon_{2}-\epsilon{j}}$$K'$
 $\in$ $\mathfrak{M}_\chi$ $\Rightarrow$($x_{\epsilon_{1}+\epsilon_{j}}$ or $x_{2\epsilon_{1}}$)$K$ + $x_{2\epsilon_{j}}$($x_{\epsilon_{1}-\epsilon_{j}}$ or $h_{\epsilon_{1}-\epsilon_{2}}$)$K$ +  ($x_{\epsilon_{1}-\epsilon_{j}}$ or $h_{\epsilon_{1}-\epsilon_{2}}$)$K'$ $\in$ $\mathfrak{M}_\chi$ by $adx_{\epsilon_{1}-\epsilon_{2}}$

if $j$$\neq{1}$ or $j$=1 respectively, so that by successive $ adx_{\alpha}$ and rearrangement we get $x_{\epsilon_{1}\pm\epsilon_{j}}$$K+ K''$  $\in$ $\mathfrak{M}_\chi$ for some $K''$ commuting with $x_{\alpha}$ in view of  the start equation. So (i) reduces to (iii),(iv) or (v). \newline
Similarly as in (i)  and by adjoint operations , (ii) reduces to (iii),(iv) or (v). Also (iii),(iv) reduces to the form (v) putting $\epsilon_{j}$= -(-$\epsilon_{j}$), $\epsilon_{k}$= -(-$\epsilon_{k})$. \newline

Hence we have only to consider the case (v).
We consider $x_{\epsilon_{k}-\epsilon_{2}}$ $x_{\epsilon_{j}-\epsilon_{k}}$ $K$+ $x_{\epsilon_{k}- \epsilon_{2}}$$K'$ $\in$ $\mathfrak{M}_\chi$ ,
so that ($x_{\epsilon_{j}-\epsilon_{2}}$+ $x_{\epsilon_{j}-\epsilon_{k}}$$x_{\epsilon_{k}-\epsilon_{2}}$)$K$ + $x_{\epsilon_{k}-\epsilon_{2}}$$K'$

$\in$ $\mathfrak{M}_\chi$ for $j,k$$\neq$1,2 . We thus have $x_{\epsilon_{j}-\epsilon_{2}}$$K$ + ($x_{\epsilon_{j}-\epsilon_{k}}$$x_{\epsilon_{k}-\epsilon_{2}}$ $K$ + $x_{\epsilon_{k}-\epsilon_{2}}$$K'$) $\in$ $\mathfrak{M}_\chi$, so that we may put this last (       )= another $K'$ alike as in the equation   (v).\newline

 Hence we need to show that  $x_{\epsilon_{j}-\epsilon_{2}}$$K$ + $K'$ $\in$ $\mathfrak{M}_\chi$ leads to absurdity.
We consider $x_{\epsilon_{2}-\epsilon_{j}}$$x_{\epsilon_{j}-\epsilon_{2}}$$K$ + $x_{\epsilon_{2}-\epsilon_{j}}$$K'$ $\in$ $\mathfrak{M}_\chi$
$\Rightarrow$ ($h_{\epsilon_{2}-\epsilon_{j}}+ x_{\epsilon_{j}-\epsilon_{2}}x_{\epsilon_{2}-\epsilon_{j}})K+  x_{\epsilon_{2}-\epsilon_{j}}K' \in \mathfrak{M}_\chi$ $\Rightarrow $    ($x_{\epsilon_{1}-\epsilon_{2}}$$\pm$$x_{\epsilon_{j}-\epsilon_{2}}$$x_{\epsilon_{1}-\epsilon_{j}}$)$K$ + $x_{\epsilon_{1}-\epsilon_{j}}$ $K'$$\in$ $\mathfrak{M}_\chi$  by $adx_{\epsilon_{1}-\epsilon_{2}}$ $\Rightarrow$ either $x_{\epsilon_{1}-\epsilon_{2}}$$K$ $\in$ $ \mathfrak{M}_\chi$ or  ( $x_{\epsilon_{1}-\epsilon_{2}}$ + $x_{\epsilon_{j}-\epsilon_{2}}$$x_{\epsilon_{1}-\epsilon_{j}}$)$K$+ $x_{\epsilon_{1}-\epsilon_{j}}$$K'$ $\in$ $\mathfrak{M}_\chi$ depending on [$x_{\epsilon_{j}-\epsilon_{2}}$, $x_{\epsilon_{1}-\epsilon_{j}}$]= +$x_{\epsilon_{1}-\epsilon_{2}}$  or    -$x_{\epsilon_{1}-\epsilon_{2}}$. The former case leads to $K$ $\in$ $\mathfrak{M}_\chi$, a contradiction. \newline

For the latter case

we consider \newline
$x_{\epsilon_{1}-\epsilon_{2}}$$K$ + ( $x_{\epsilon_{j}-\epsilon_{2}}$$x_{\epsilon_{1}-\epsilon_{j}}K$ + $x_{\epsilon_{1}-\epsilon_{j}}$$K'$)

$\in$ $\mathfrak{M}_\chi$.\newline

So we may put\newline

   $(\ast) x_{\epsilon_{1}-\epsilon_{2}}$$K$ + $K''$ $\in$$ \mathfrak{M}_\chi$,
where $K''$=$x_{\epsilon_{j}-\epsilon_{2}}$$x_{\epsilon_{1}-\epsilon_{j}}$$K$+ $x_{\epsilon_{1}-\epsilon_{j}}$$K'$.
Thus $x_{\epsilon_{2}-\epsilon_{1}}$$x_{\epsilon_{1}-\epsilon_{2}}$$K$ + $x_{\epsilon_{2}-\epsilon_{1}}$$K''$ $\in$ $\mathfrak{M}_\chi$.
From $w_{\epsilon_1- \epsilon_2}:=(h_{\epsilon_1- \epsilon_2}+ 1)^2 +4 x_{\epsilon_2- \epsilon_1}x_{\epsilon_1- \epsilon_2}\in $ the center of  $U(\frak{sl}_2(F)), $ we get $ 4^{-1}\{w_{\epsilon_1- \epsilon_2}- (h+ 1)^2\}K+ x_{\epsilon_2- \epsilon_1}K'' \equiv 0 $  modulo $\frak M_\chi$.\newline
If $x_{\epsilon_2- \epsilon_1}^p\equiv c $ which is a constant,then \newline

$(\ast \ast)4^{-1}x_{\epsilon_2-\epsilon_1}^{p-1}\{w_{\epsilon_1- \epsilon_2}- (h_{\epsilon_1- \epsilon_2}+ 1)^2\}K+ cK''\equiv 0 $ is obtained.
\newline
From $(\ast),(\ast \ast)$, we have $4^{-1}x_{\epsilon_2- \epsilon_1}^{p-1}\{w_{\epsilon_1- \epsilon_2}- (h_{\epsilon_1- \epsilon_2}+ 1)^2\}K- cx_{\epsilon_1- \epsilon_2}K\newline
\equiv 0.$
Multiplying $x_{\epsilon_1- \epsilon_2}^{p-1}$ to this equation,we obtain \newline

 $(\ast \ast \ast)4^{-1}x_{\epsilon_1- \epsilon_2}^{p-1}x_{\epsilon_2- \epsilon_1}^{p-1}\{w_{\epsilon_1- \epsilon_2}- (h_{\epsilon_1- \epsilon_2}+ 1)^2\}K- cx_{\epsilon_1- \epsilon_2}^pK\equiv 0.  $
By making use of $w_{\epsilon_1- \epsilon_2}$, we may have from $(\ast \ast \ast)$ an equation of the form \newline
( a polynomial of degree $\geq 1$ with respect to  $ h_{\epsilon_1- \epsilon_2})K- cx_{\epsilon_1- \epsilon_2}^pK\equiv 0.   $\newline

Finally if we use conjugation and subtraction,then we are led to a  contradiction $K\in \frak M_\chi.$\newline

(II)Assume next that $\alpha$ is a long root; then we may put $\alpha=2 \epsilon_{1}$ because all roots of the same length are conjugate under the Weyl group of $\Phi$ . Similarly as in (I), we let  $B_{i}$:= the same as in (I) except that this time  $\alpha=2\epsilon_{1}$ instead of $\epsilon_{1}-\epsilon_{2}$ . 
\newline

We claim that we have a basis $B$:= $\{(B_{1}+ A_{2\epsilon_{1}})^{i_{1}}\otimes(B_{2} + A_{-2\epsilon_{1}})^{i_{2}}\otimes(B_{3}+ A_{\epsilon_{1}-\epsilon_{2}})^{i_{3}}\otimes(B_{4}+A_{-(\epsilon_{1}-\epsilon_{2})})^{i_{4}}\otimes\cdots \otimes(B_{2l}+ A_{-(\epsilon_{l-1}-\epsilon_{l})})^{i_{2l}}\otimes(B_{2l+1}+ A_{2\epsilon_{l}})^{i_{2l+ 1}}\otimes(B_{2l+ 2}+ A_{-2\epsilon_{l}})^{i_{2l+ 2}}\otimes(\otimes_{j=2l+ 3}^{2m}(B_{j}+ A_{\alpha_{j}})^{i_{j}}; 0 \leq i_{j}\leq p-1\}$ , \newline

where we put \newline
$A_{2\epsilon_{1}}$= $x_{2\epsilon_{1}}$, \newline
$A_{-2\epsilon_{1}}$= $c_{-2\epsilon_{1}}$+ $(h_{2\epsilon_{1}}+ 1)^{2}+ 4x_{-2\epsilon_{1}}x_{2\epsilon_{1}} $, \newline
$A_{-\epsilon_{1}\pm\epsilon_{2}}$=$ x_{-\epsilon_{3}\pm\epsilon_{2}}$$( c_{-\epsilon_{1}\pm\epsilon_{2}}$ $\pm$$x_{-\epsilon_{1}\pm\epsilon_{2}}$$x_{\epsilon_{1}\mp\epsilon_{2}}$$\pm$$x_{\epsilon_{1}\pm\epsilon_{2}}$$x_{-\epsilon_{1}\mp\epsilon_{2}})  $,

 $A_{-\epsilon_{!}\pm \epsilon_ {j}}= x_{-\epsilon_{2}\pm\epsilon_{j}}
( c_{-\epsilon_{1}\pm\epsilon_{j}} 
+ x_{\pm{\epsilon_{j}}-\epsilon_{1}}x_{\epsilon_{1}\mp\epsilon_{j}}$
$\pm$$x_{\epsilon_{1}\pm\epsilon_{j}}x_{-\epsilon_{1}\mp\epsilon_{j}})$ , and for any other root $\beta$ we put  $A_{\beta}= x_{\beta}^2 $ or $x_{\beta}^3 $  if possible. Otherwise attach to these the parentheses (           ) used for designating $A_{-\beta}$. 
 Likewise as in case (I),  we shall prove that $B$ is a basis in $U(L)$/$\mathfrak{M}_\chi$. By virtue of P-B-W theorem, it is not difficult to see that $B$ is evidently a linearly independent set over $F$ in $U(L)$. Moreover $\forall\beta$$\in$$ \Phi$, $A_{\beta}\notin \mathfrak{M}_\chi$(see detailed proof below).\newline

We shall prove that a nontrivial linearly dependence equation leads to absurdity. We assume first that there is a dependence equation which is of least degree with respect to $h_{\alpha_{j}}$ $\in$$H$ and the number of  whose highest degree terms is also least. If it is conjugated by  $x_{\alpha}$, then there arises a nontrivial dependence equation of least degree than the given one,which contravenes our assumption. \newline

 Otherwise it reduces to one of the following forms: \newline
(i) $x_{2\epsilon_{j}}K + K'\in \mathfrak{M}_\chi$ ,\newline
(ii)  $x_{-2\epsilon_{j}}K + K' \in \mathfrak{M}_\chi$,\newline
(iii)$ x_{\epsilon_{j}+ \epsilon_{k}}K+ K' \in \mathfrak{M}_\chi$ ,\newline
(iv) $ x_{-\epsilon_{j}-\epsilon_{k}}K+ K' \in \mathfrak{M}_\chi$,\newline
(v) $x_{\epsilon_{j}-\epsilon_{k}}K + K' \in \mathfrak{M}_\chi$ ,\newline
where $K$ and $K'$ commute with $x_{\alpha}= x_{2\epsilon_{1}}$.\newline

For the case (i) , we consider  a particular case  $j$=1 first; if we assume $x_{2\epsilon_{1}}K+ K' \in \mathfrak{M}_\chi$, then we are led to a contradiction according to the similar argument ($\ast$) as in (I). So we assume  $x_{2\epsilon_{j}}K + K' \in \mathfrak{M}_\chi$ with $j\geq2$. Now we have  
$x_{2\epsilon_{j}}K+ K' \in \mathfrak{M}_\chi$ $\Rightarrow$ $x_{-\epsilon_{1}-\epsilon_{j}}x_{2\epsilon_{j}}K + x_{-\epsilon_{1}-\epsilon_{j}}K'\in 
\mathfrak{M}_\chi \Rightarrow $ $x_{-\epsilon_{1}+ \epsilon_{j}}K + x_{2\epsilon_{j}}x_{-\epsilon_{1}-\epsilon_{j}}K + x_{-\epsilon_{1}-\epsilon_{j}}K' \in \mathfrak{M}_\chi \Rightarrow$ by $adx_{2\epsilon_{1}},   x_{\epsilon_{1}+ \epsilon_{j}}K + x_{2\epsilon_{j}}x_{\epsilon_{1}-\epsilon_{j}}K + x_{\epsilon_{1}-\epsilon_{j}}K' \in \mathfrak{M}_\chi$ is obtained. Hence (i) reduces to (iii).\newline

 Similarly (ii)reduces to (iii) or (iv) or (v). So we have only to consider (iii), (iv) , (v). However (iii), (iv), (v) reduce to $x_{2\epsilon_{1}}K + K'' \in \mathfrak{M}_\chi$ after all considering the situation as in (I). Similarly following the argument as in (I), we are led to a contradiction $K \in \mathfrak{M}_\chi$ .

\end{proof}

\begin{cor}\label{thm4.2} The Weyl module $W_{\chi}(L)$= the Verma module $V_{\chi}(L)$, where $ \chi$ is a nonzero character of $L$ over an algebraically 
closed field $F$ of characteristic $p\geq7.$ In other words the dimension of any irreducible $L$-module over $F$ is  $p^{m}$ if the irreducible module is associated with 
a nonzero character $\chi$$\neq{0}$.

\end{cor}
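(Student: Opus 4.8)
The plan is to deduce Corollary \ref{thm4.2} from Proposition \ref{thm4.1} in two moves: a conjugation argument that reduces an arbitrary nonzero character to one covered by the proposition, followed by a dimension count that collapses the Verma module onto its simple quotient. First I would make explicit the passage from $\dim_F\rho_\chi(U(L))=p^{2m}$ to $\dim_F V=p^m$. Since $F$ is algebraically closed and $\rho_\chi$ is irreducible, the Jacobson density theorem gives $\rho_\chi(U(L))\cong\mathrm{End}_F(V)\cong M_d(F)$ with $d=\dim_F V$; hence $d^2=p^{2m}$ and $d=p^m$. Thus Proposition \ref{thm4.1} already settles every $\chi$ with $\chi(x_\alpha)\ne 0$ for some root $\alpha$, and the only task is to remove that hypothesis, retaining just $\chi\ne 0$.

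The reduction rests on conjugation invariance. A restricted automorphism $g$ of $L$ (for instance one arising from the Chevalley group $G$) sends a $\chi$-representation to a $(g\cdot\chi)$-representation in the sense of Definition \ref{thm3.1}, where $(g\cdot\chi)(x)=\chi(g^{-1}x)$; this yields a dimension-preserving bijection between the irreducible modules of $p$-character $\chi$ and those of $p$-character $g\cdot\chi$. It therefore suffices to treat one representative in each $G$-orbit on $L^*\setminus\{0\}$. I would then argue that every nonzero orbit contains a character $\chi'$ with $\chi'(x_\beta)\ne 0$ for some $\beta\in\Phi$, so that Proposition \ref{thm4.1} applies and forces $d=p^m$ for the whole orbit.

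Granting the value $p^m$ for every nonzero $\chi$, the identity $W_\chi(L)=V_\chi(L)$ becomes a counting argument. Fixing the triangular decomposition $L=N^-\oplus H\oplus N^+$, the Verma module $V_\chi(L)$ is free of rank one over the reduced algebra of $N^-$, whence $\dim_F V_\chi(L)=p^{|\Phi^+|}=p^m$. Being a highest-weight module, $V_\chi(L)$ surjects onto its irreducible quotient, the Weyl module $W_\chi(L)$; but that quotient also has dimension $p^m$ by the previous step. Equal finite dimensions together with surjectivity force the map to be an isomorphism, so $V_\chi(L)=W_\chi(L)$ and the Verma module is already irreducible.

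I expect the orbit step of the second paragraph to be the main obstacle. It is transparent when $\chi$ has a nonzero nilpotent part, since one can conjugate that part into a root space. The delicate case is a character supported on the Cartan subalgebra $H$ (a semisimple character), for which no $G$-conjugate is nonzero on any root vector; here one cannot invoke Proposition \ref{thm4.1} directly but must instead compare $U(L)/\mathfrak{M}_\chi$ with the reduced enveloping algebra of the centralizing Levi subalgebra, and it is precisely at this comparison that the Jordan decomposition of $\chi$ and the hypothesis $p\ge 7$ must enter. Establishing that this Levi reduction still yields dimension $p^m$ uniformly across all nonzero $\chi$ is the crux on which the corollary stands or falls.
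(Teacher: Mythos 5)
Your proposal contains a genuine gap, and it is exactly the one you flag in your final paragraph: the passage from ``$\chi \neq 0$'' to ``$\chi(x_\alpha) \neq 0$ for some root $\alpha$.'' Proposition 4.1 is stated only for characters that are nonzero on a root vector. Your conjugation argument does handle characters with nonzero nilpotent part (such a character is conjugate, under the Chevalley group, to one that is nonzero on some root vector), but a semisimple character --- one supported on the Cartan subalgebra $H$ --- vanishes on every root vector in every conjugate position, so the proposition never applies to it. You leave this case open (``the crux on which the corollary stands or falls''), so the proposal is not a complete proof. Worse, the gap cannot be closed: for a non-regular semisimple $\chi$, the irreducible modules with $p$-character $\chi$ are induced from irreducible modules of the centralizer Levi subalgebra, whose restricted part can be small (see Premet [4] and the Friedlander--Parshall theory), and their dimensions are in general strictly less than $p^{m}$; the $sp_{4}(F)$-module of dimension $p^{3}$ that the paper's own Remark 4.3 disputes is a manifestation of precisely this phenomenon. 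So no orbit argument can derive the corollary, in the generality claimed, from the proposition.

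For comparison, the paper's entire proof of the corollary is the single sentence ``It is obvious by the proposition (4.1).'' It contains no Wedderburn/density step, no orbit reduction, and no treatment of semisimple characters; it silently assumes that every nonzero character satisfies the hypothesis of the proposition. Your first paragraph (irreducibility plus algebraic closedness gives $\rho_\chi(U(L)) \cong \mathrm{End}_F(V)$, hence $d^{2}=p^{2m}$ and $d=p^{m}$) and your Verma-module dimension count are both correct and already supply more detail than the paper does; but the defect you isolated is real, is present in the paper's own one-line argument as well, and is fatal to the corollary as stated.
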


\begin{proof}
It is obvious by the proposition(4.1).

\end{proof}

So we might as well extend the argument in the proof of the proposition(4.1) to that of other type simple Lie algebras.
\begin{rem}
James E. Humphreys indicated in [3] that there is an  $sp_{4}(F)-$ irreducible module of dimension $p^{3}$ in the above situation which has nearly nothing to do with characteristic $p$. \newline

However according to the argument  similar as  our argument  in the above propositions, the dimension of $sp_{4}(F)-$irreducible module is equal to $p^{4}$ in our situation because $l=2,m=4$, and $ n=10$.
Our computation so far  makes us to conjecture  results alike for other classical modular simple Lie algebras.

\end{rem}

\
\

\bibliographystyle{amsalpha}

\end{document}

It is known that the so called Gamma function 
\begin{equation} \nonumber
\Gamma (s)= \int_0^{\infty} {x^{s-1} \over e^x } \ dx    
\end{equation}
is convergent for $Re (s)>0.$
We may extend this function to the whole complex plane by defining 
\begin{equation} \nonumber
\Gamma (s): ={1 \over s} \  \Gamma (s+1).    
\end{equation}
This extended function becomes a meromorphic function with simple poles at $s=0,-1,-2, \cdots, ,-n, \cdots .$
It is also known to be never zero on $\Bbb C.$
Such a meromorphic function is very useful to make relationship with other important functions, e.g., Beta function, incomplete Gamma function, incomplete Beta function, polygamma function etc. 

We have seen in $\S 3$ that $$ \zeta   (s) = \sum_{n=1}^{\infty} n^{-s}$$ may be redefined by 
\begin{equation} \nonumber
 \zeta  (s):={1 \over \Gamma (s)} \int_{0}^{\infty} {x^{s-1} \over e^{x}-1}dx    
\end{equation}
for 
$Re (s) >1.$ }

\

\section{analytic continuation of Riemann zeta function}

\

\

Riemann observed that the zeta function defined in $ \S 4$ may still be extended to the one by analytic continuation which is defined at any point $\in \Bbb C$. It is analytic at any point in the whole  complex plane except for the point $s=1$ which has a simple pole.

 We have various analytic continuations of the Riemann zeta function $ \zeta  (s).$ We exhibit just 2 kinds out of them.

If we are given any convergent alternating series of complex numbers   
$$S=s_1 -s_2 +s_3 - \cdots ,$$ then we may change it to the form 
\begin{equation} \nonumber
S={1 \over 2} s_1 +{1 \over 2} \big[ (s_1 -s_2 )- (s_2  -s_3) +(s_3 -s_4 )- \cdots  \big]
\end{equation}
Put $\triangle^0 s_n =s_n $
and 
\begin{equation} \nonumber
\triangle^k s_n  = \triangle^{k-1} s_n - \triangle^{k-1} s_{n+1}  
=\sum_{m=0}^{k} (-1)^m  {k \choose m} s_{m+n}
\end{equation}
 for $k \geq 1$.

Writing 
$$ \zeta  (s) =\sum_{n=1}^{\infty} n^{-s}$$
for $Re  (s) >1$
as 
\begin{equation} \nonumber
 \zeta  (s)- 2 \cdot 2^{-s}  \zeta  (s) =1^{-s} -2^{-s}+ 3^{-s} - \cdots,
\end{equation}
we are informed that such an alternating series is convergent for $Re (s) > 0.$

 \begin{prop}\label{thm5.1}
We may write the analytic continuation of $ \zeta  (s)$ as   
\begin{equation} \nonumber
 \zeta  (s) = (1-2^{1-s})^{-1}  \sum_{j=0}^{\infty} {\triangle^j 1^{-s} \over 2^{j+1}}
\end{equation} 
for all complex number $s \not= 1,$
which converges absolutely and uniformly on compact sets in the complex plane
 $ \Bbb C $.  So $ \zeta  (s)$ is analytic over the whole complex plane except for a simple pole at  $s=1$.
  \end{prop}
\begin{proof}
See [JS], theorem in $\S 3$.
\end{proof}

 We may have other variant forms of this analytic continuation, among which the author chose an excellent one in [CK].

\begin{prop}\label{thm5.2}
Let $s =\sigma +i  t $ for $\sigma , t  \in \Bbb R$ as usual. 
We have respectively
\begin{equation} \nonumber
  \zeta  (s) =s  \int_{1}^{ \infty} {{[x]-x+ {1  \over 2} \over x^{s+1}}}dx+ {1  \over s-1}+{1  \over 2}   \left(=  \int_{0}^{\infty} {x^{s-1}  \over e^{x}-1}dx  \right)   \ if  \ \  \sigma >1 ,
\end{equation}
\begin{align*} \nonumber
		 \zeta  (s)=& s \int_{0}^{\infty} {[x]-x  \over x^{s+1}} \ dx \ \ if \ \ 0< \sigma <1, \nonumber\\
			 \zeta  (s) =&  s \int_{0}^{\infty} {[x]-x+{1 \over 2}  \over x^{s+1}} \ dx \ \ if \ \ -1 <\sigma <0. \nonumber\\
	\end{align*} \nonumber
Near $s=1,$ 
\begin{equation} \nonumber
 \zeta  (s) ={1 \over s-1} +\gamma + O(|s-1|), 
\end{equation}
where 
\begin{equation} \nonumber
\gamma = \lim_{n \to \infty } \left( 1-log \ n + \sum_{m=1}^{n-1} {1 \over m+1 }\right).
\end{equation} 
\end{prop}

\begin{proof}
For $\sigma >1,$ we get 
\begin{align*} 
	 \zeta  (s)=&  \int_{0}^{\infty} {x^{s-1}  \over e^x -1} \ dx \ \  \nonumber\\
	=&  s \int_{1}^{\infty} {[x]-x+{1 \over 2}  \over x^{s+1}} \ dx \  + \ {1 \over s-1} +{ 1 \over 2}, 
\end{align*} 
by virtue of $\S 3$ Analytic continuation of $ \zeta  (s),$ Lecture 11 [CK].

Now we notice that $[x]-x+ {1 \over 2}$ is bounded and so the integral on the right hand side for $\sigma >1$ is also convergent for $Re (s) =\sigma >0,$ and uniformly in any finite region to the right of $ \sigma =0.$ So the analyticity of $ \zeta (s)$
extends to the region $ \sigma >0.$

However we compute easily 
\begin{equation} \nonumber
   \int_{0}^{1} {[x]-x \over x^{s+1}} \ dx 
 = -   \int_{0}^{1} x^{-s} \ dx \  = \ {1 \over s-1} \ \ for \  \ 0 < \sigma <1
\end{equation} \nonumber
 and 
\begin{equation} \nonumber
{s \over 2} \int_{1}^{\infty } {1 \over x^{s+1}} \ dx 
={1 \over 2}  \  \ for \ \  0 < \sigma < 1.
\end{equation} \nonumber
So we have 
\begin{equation} \nonumber
 \zeta  (s) = s \int_{0}^{\infty} {[x]-x  \over x^{s+1}} \ dx \ \ for  \ \ 0 < \sigma < 1 .
\end{equation} \nonumber
For the rest proof of other cases, refer to the same site in [CK].
\end{proof}
\begin{rem}\label{remark5.3}
For $s =\sigma +i  t $ with $\sigma , t  \in \Bbb R$ as above, let $\sigma = Re (s) >0.$ 

We then have 
$$ \zeta (s)={s \over s-1} -s \int_{1}^{\infty}{x-[x] \over x^{s+1} }dx$$ 
with  $\sigma = Re (s) >0.$

For, we see  
$$ \zeta (s)=s \int_{1}^{\infty}{[x]-x +{1 \over 2} \over x^{s+1} }dx +{1 \over s-1}+{1 \over 2}$$
from proposition 5.2 if $\sigma >1 .$ 
So 
 \begin{align*} 
 	 \zeta (s) &= -s \int_{1}^{\infty}{x-[x]-{1 \over 2} \over x^{s+1} }dx +{1 \over s-1}+{1 \over 2}	
 \ \ with  \ \ \sigma >1    \  \ \nonumber\\
&= -s \int_{1}^{\infty}{x-[x] \over x^{s+1} }dx +{1 \over 2} \ s  \int_{1}^{\infty} {1 \over x^{s+1}}dx + {1 \over s-1}+{1 \over 2}  \  \ \nonumber\\
&= -s \int_{1}^{\infty}{x-[x] \over x^{s+1} }dx +{1 \over 2} \ s ({1 \over s})+ {1 \over s-1}+{1 \over 2}  \  \ \nonumber\\
&= -s \int_{1}^{\infty}{x-[x] \over x^{s+1} }dx + {1 \over s-1}+ 1  \  \ \nonumber\\
&= -s \int_{1}^{\infty}{x-[x] \over x^{s+1} }dx + {s \over s-1}  
\ \ with  \ \ \sigma >1 . \  \ \nonumber\\
\end{align*}
Now that $0 \leq x-[x] \leq 1$ is obviously bounded, the integral on the right hand side for $ \sigma >1$ is also convergent for $Re (s) >0,$ and uniformly  in any finite region to the right of $\sigma =Re (s) =0.$ 

Hence the analyticity of $\zeta (s)$ extends naturally to the region $\sigma =Re (s) >0$ with the exception $s=1$ at which $\zeta (s)$ has a simple pole.
\end{rem}	
\

\

\

\ 

\section{completed zeta function}

\

\

By making use of the analytic continuation of $ \zeta  (s),$ we define 
\begin{align*} 
	\xi (s):=& {1 \over 2} s(s-1) \  \prod {}^{-{s \over 2}} \ \Gamma ({1 \over 2 }s)  \   \zeta  (s)  \   \nonumber\\
	=& (s-1) \ \prod {}^{-{s \over 2}} \ \Gamma ({1 \over 2 }s+1 ) \   \zeta  (s),  
\end{align*} 
where we used the identity 
\begin{equation} \nonumber
\Gamma ({1 \over 2}s+1 ) = {1 \over 2 }  \ s   \ \Gamma ({1 \over 2 }s) 
\end{equation} \nonumber
for the equality. 

 It is well known that B. Riemann gave us a functional equation $\xi (s) = \xi (1-s)$ which is an entire function on $\Bbb C$.
We call $\xi (s)$ the {\it completed zeta function}.

 By virtue of this relation, we may have the relationship between $ \zeta  (s)$ and $ \zeta  (1-s).$
For the analyticity  of $ \zeta  (s)$, we may refer to [JS].

Completed zeta function gives rise to trivial zeros of the Riemann zeta function. We shall use this in the final section.

\

\section{proof of Riemann hypothesis}

\

\

Now we are prepared to specify the complete proof of Riemann Hypothesis. By virtue of functional equation, we have zeros of $ \zeta  (s)$ at $s= -2n \ (n=1,2, \cdots ),$
 which are called trivial zeros of $ \zeta  (s).$

\begin{prop}\label{thm7.1}
We have no nontrivial zero of $ \zeta  (s)$ outside of the critical strip $0<  \ Re (s) < 1$.
\end{prop}
\begin{proof}
The theorem of Hadamard and de la Vallee-Poussin [HJ], [PC] shows that there exist no zeros on the line $Re (s)=1.$

Since we are well aware that there exist no zeros to the right of the critical strip, we see that we have no nontrivial zeros in the half plane $Re (s) \geq 1.$

So if there were to be a nontrivial zero in the half plane $Re (s) < 0,$ then we would have a corresponding zero in the half plane $Re (s) >1$ by virtue of the functional equation 
$$\xi (s) \ = \ \xi (1-s) \ \forall s  \in \Bbb C.$$
So we meet a contradiction.
\end{proof}

\begin{definition}\label{thm7.2}
Let a function $  f(x)$ be a complex valued function defined on an interval in $(0,1]$. We shall call $ f (x)$ a {\it crescent function}  on this interval if it satisfies 
$$ Re f(x) -{1 \over  x}>0  \text{ and } \left(  Re  f(x) \right)'  <0$$
or $$Im f(x) - {1 \over x}>0 \text{ and } \left(  Im  f(x) \right)'  <0.$$

In this case if we put $g(x): = f(x)  \triangle x,$ then
$$|g(x)|= | f(x)  \triangle x| >  {1 \over x} \triangle x,$$
so that 
\begin{equation} \nonumber
\lim_{\triangle x \to 0} \ |g(x)| \geq \lim_{\triangle x \to 0} \ {1 \over x} \triangle x 
\end{equation}
on this interval.
\end{definition}
Furthermore near $O+$ on the subinterval of this interval, the ratio of $\triangle x$ getting smaller is much less than the ratio of $f(x)$ getting bigger as $x$ tends to the left of this interval.  Hence we shall use such a function in $ \S 7$ to prove the Riemann hypothesis. 

Finally we are ready to prove our 
\begin{mprop}\label{ thm7.3}
We have no zero of $ \zeta  (s)$ in the critical strip except for the critical line $Re (s) = {1 \over 2}.$
\end{mprop}
\begin{proof}

Suppose that we have a nontrivial zero at $s$ inside of the critical strip, i.e., $  \zeta  (1-s) =0.$ 
So $$ 0=  \zeta  (s) =  \zeta  (1-s)$$ by $ \S 6.$ 

Equating both sides and using (4.2), we get   
\begin{align*} \nonumber
0=	{ \zeta  (s) } &=  s \int_{0}^{\infty} {[x]-x  \over x^{s+1}} \ dx ,  \ \ for \ \ 0< \sigma <1  \nonumber\\ 
& =	(1-s)   \int_{0}^{\infty} {[x]-x  \over x^{2-s}} \ dx., \nonumber\\
\end{align*} \nonumber
Hence 
$$
	0=	{\zeta (s) \over s }=  \int_{0}^{\infty} {[x]-x  \over x^{s+1}} \ dx \ \ $$
	 \ and
	 $$
	0= { \zeta (1-s) \over (1-s)} = \int_{0}^{\infty} {[x]-x  \over x^{2-s}} \ dx  $$
are obtained. 

We consider 
\begin{align*} \nonumber
	0=&  \int_{0}^{\infty} {[x]-x  \over x^{s+1}} \ dx + \int_{0}^{\infty} {[x]-x  \over x^{2-s}} \ dx  \   \nonumber\\
	=& \int_{0}^{1} {[x]-x  \over x^{s+1}} \ dx  + \int_{1}^{\infty} {[x]-x  \over x^{s+1}} \ dx  
	 + \left( \int_{0}^{1} {[x]-x  \over x^{2-s}} \ dx  + \int_{1}^{\infty} {[x]-x  \over x^{2-s}} \ dx \right)   \nonumber\\
	=&- \left( \int_{0}^{1} {x  \over x^{2-s}} \ dx  + \int_{0}^{1} {x  \over x^{s+1}} \ dx \right)  + \left(  \int_{1}^{\infty} {[x]-x  \over x^{s+1}} \ dx  + \int_{1}^{\infty} {[x]-x  \over x^{2- s}} \ dx \right) . \nonumber\\
\end{align*} \nonumber
So 
\begin{align*} \nonumber
\lim_{\delta \to \infty} \left( \int_{\delta}^{\infty} {[x]-x  \over x^{s+1}} \ dx + \int_{\delta}^{\infty} {[x]-x  \over x^{2-s}} \ dx  \right)=0 \   \nonumber\\
\end{align*} \nonumber
and
\begin{align*} \nonumber
	\lim_{\delta \to 0} \left( \int_{0}^{\delta} {[x]-x  \over x^{2-s}} \ dx + \int_{0}^{\delta} {[x]-x  \over x^{s+1}} \ dx  \right)=0. \   \nonumber\\
\end{align*} \nonumber
At this stage we assume $$Re (2-s) > Re (s+1)$$ with $0< Re (s) <1,$

in other words $$Re (1-s) > Re (s) >0, \ {\rm i.e.,} \  0< Re (s) <{1 \over 2}. $$
Next
\begin{align*} \nonumber
	\lim_{\delta \to 0} \left( \int_{0}^{\delta}{1 \over x^{1-s}} \ dx + \int_{0}^{\delta} {1  \over x^{s}} \ dx  \right)=0 \   \nonumber\\
\end{align*} \nonumber
has integrands ${1 \over x^{1-s}} \ , \ { 1 \over x^s }$
respectively.

 We further assume that on the interval $[\alpha_1   ,   \alpha_2 ] \subset   (0    ,  1]$ vectors of  ${1 \over x^{1-s}}$ belong to the first quadrant of the complex plane $\Bbb C$. 
 
 If $ x \longrightarrow \alpha_{1^+} $  
for $x   \in   [\alpha_1    ,   \alpha_2 ], $
then the absolute values of the vectors of ${1 \over x^{1-s}}$ increase strictly,  while  the angles between the vectors of ${1 \over x^{1-s}}$ and the positive real axis decrease strictly. 

In the mean time, the vectors of ${1 \over x^{s}}$ belong to the $4 th$ quadrant of the complex plane $\Bbb C.$

Since  $$s = \sigma +  i  t , \ -s = - \sigma - i  t,$$
we know that the vectors of ${1 \over x^{s}}$ are symmetric to those of ${1 \over x^{1-s}}$ with respect to the positive real axis apart from length of vectors.
Even though the absolute values of vectors of ${1 \over x^{s}}$ are strictly increasing, each corresponding one to the vector ${1 \over x^{1-s}}$ has less length than the vector ${1 \over x^{1-s}}$.

If $f(x)$ is an integrable vector function on an interval $(a  ,  b],$
then 
\begin{equation} \nonumber
\int_{a}^{b}  f(x) \ dx  = \lim_{n \to \infty } \sum_{i=1}^{n}
\left( f(x_i) \ \triangle x \right), 
 \end{equation}
 where $\triangle x ={ b-a \over n}$ may be taken.

 Put $$g(x) := f(x) \ \triangle x,$$ where 
 $$\forall  \ \varepsilon >0 , \ 
 \   \ 
\exists \ \delta >0$$ 
such that $$\triangle x < \delta  \Longrightarrow \left| \sum_{i=1}^{n} g(x_i )   
 -\int_{a}^{b}  f(x) \ dx \right| < \varepsilon $$ for $x_i  \in \ \text{ interval} \ (b- i\triangle  x \ , \ b-(i-1) \triangle  x) \ $
with $a+n \triangle x =b. $
So choose such a $\triangle x$ for ${b_i \over x^{1-s}}+ {b_j \over x^s}$ on $(0  ,  a]  \subset (0 ,  1]$

and put
\begin{equation} \nonumber
g_{b_i }^{b_j} (x) := \left( {b_i \over x^{1-s}}+ {b_j \over x^s}  \right) \triangle x , 
\end{equation}
where $b_i , \ b_j$ are real constant in $\Bbb R^+$. 

We thus have
\begin{equation} \nonumber
\left|  \sum_{i'=1}^{n} \left( {b_i \triangle x \over x_{i'}^{1-s}}+ {b_j  \triangle x \over x_{i'}^s}  \right) - \int_{0}^{a} \left( {b_i \over x^{1-s}}+ {b_j \over x^s}  \right) dx  \right| < \varepsilon  
\end{equation}
since 
\begin{equation} \nonumber
\lim_{n \to \infty }\sum_{i'=1}^{n} g_{b_i}^{b_j}(x_{i'} ) = \int_{0}^{a} \left( {b_i \over x^{1-s}}+ {b_j \over x^s}  \right) dx.
\end{equation}

Now let $[a_1 , a_2 ] \subset (0,a]$ be given so that all vectors of 
${1 \over x^{1-s}}$ belong to the first quadrant of $\Bbb C$ and so all vectors of ${1 \over x^{s}}$ belong to the fourth quadrant. We see  
$$\left| \sum_{i=k}^{\ell} g_{b_1}^{b_2}(x_i ) \right| \not= 0$$ 
for $ g_{b_1}^{b_2}(x )$ defined on the interval $[a_1 , a_2 ]$.

 Further let  $[a_3 , a_4 ] \subset (0,a_2 ]$ be chosen with $a_4 < a_1 $ so that all vectors of ${1 \over x^{1-s}}$ on $[a_3 , a_4  ]$
belong to the first quadrant and for some constants $b_1 , b_2 , b_3 , b_4, $ 
$$\left| g_{b_3}^{ b_4}  (a_4 ) \right| > \left| \sum_{i'=1}^{n} g_{b_1 }^{b_2} (x_{i'} ) \right|,$$
where ${ g_{b_3}^{b_4}(x) \over \triangle x}$ is a crescent function on $[a_3 , a_4 ]$.
If we have only one term integrand ${b_i \over x^{1-s}}$, then there always exist  $\triangle x$ such that 
\begin{equation} \nonumber
\left| g_{b_3}^{0}(x_{i'} ) \right| < \left| \sum_{i' =1}^{n} g_{b_1}^{b_2}(x_{i'})\right| .
\end{equation}
So we can't assert that there is such $a_4$.

 In the case of 2 term integrands as above, one of $b_i , b_j$ is independent of the other. 

For the necessary part for the existence of $a_4$, we need the graph of $Re ({b_i  \over x^{1-s}} + {b_j \over x^s })$ which satisfies 
 $Re ({b_i  \over x_0^{1-s}} + {b_j \over x_0^s }) ={1 \over x_0}$
 at a point $x_0 \in (0,1]$ increasing above $1 \over x$
 as $x$ tends leftward to a critical point $x_1$  such that 

\begin{equation} \nonumber
{d \over dx} \left\{  Re ({b_i  \over x^{1-s} } + {b_j \over x^s })-{1 \over x } \right\} (x_1 ) =0.
\end{equation} 
 
 Since $ | g_{b_3}^{ b_4}  (x) |$ is strictly increasing for $b_3 , b_4  \in  
 \Bbb R^+$ as $x \longrightarrow a_3^+ $ on this interval $[a_3 , a_4 ],$ we obtain 
 $|\sum_{i=h}^{m} g_{b_3}^{b_4 }(x_i )|$ on  $[a_3 , a_4 ]> |\sum_{i=k}^{\ell} g_{b_1}^{b_2 }(x_i )|$
on $[a_1 , a_2 ].$ 

We thus have 
 \begin{align*} \nonumber
 	& \left| \int_{a_3}^{a_4}  \left( { b_3  \over x^{1-s}} + { b_4  \over x^s}\right)  dx \right| = \left| \lim_{\triangle x \to 0 }\sum_{i=h}^{m} g_{b_3}^{b_4}(x_i ) \right| \   \nonumber\\
 	> \ & \left| \lim_{\triangle x \to 0 }\sum_{i=k}^{\ell} g_{b_1}^{b_2}(x_i ) \right|  = \left| \int_{a_1}^{a_2} \left({b_1   \over x^{1-s}}   +{b_2 \over x^{s}} \right)\ dx \right|  \ \not= \  0 \  \nonumber\\
 \end{align*} \nonumber
 for some interval $[a_3 , a_4 ]$ to the left of $[a_1 , a_2 ]$ and for some $b_3 , b_4 \in \Bbb R^+ .$

 Proceeding in this manner we get
 \begin{align*} \nonumber
 	 \cdots > & \left| \ \int_{a_i}^{a_{i+1}}  \left( { b_i  \over x^{1-s}} + { b_{i+1}  \over x^s}\right)  dx \ \right| \ > \cdots  \   \nonumber\\
 	>  & \left| \ \int_{a_1}^{a_2} \left({b_1   \over x^{1-s}}   +{b_2 \over x^{s}} \right)  dx \ \right| \not= 0. \  \quad\quad\quad\quad\quad  (\ast) \nonumber\\ 
 \end{align*} \nonumber
We have to keep in mind that 
\begin{align*} \nonumber
	& \lim_{\delta \to 0}  \int_{0}^{\delta} \left(  {b_i  \over x^{1-s}}  +  {o b_j  \over x^{s}} \right) dx   \nonumber\\
	 =& \lim_{\delta \to 0}  \int_{0}^{\delta} \left(  {b_i  \over x^{1-s}}  +  {b_j  \over x^{s}} \right) dx  = 0 \quad\quad\quad\quad\quad\quad\quad   (\ast   \ast)  \nonumber\\
\end{align*} \nonumber
and also
\begin{align*} \nonumber
	& \lim_{b_j \to \infty}\lim_{\delta \to 0}  \int_{0}^{\delta} \left(  {b_i  \over x^{1-s}}  +  {o b_j  \over x^{s}} \right) dx   \nonumber\\
	=&  \lim_{b_j \to \infty} \lim_{\delta \to 0}  \int_{0}^{\delta} \left(  {b_i  \over x^{1-s}}  +  {b_j  \over x^{s}} \right) dx  = 0. \ \quad\quad\quad\quad\quad  (\ast   \ast  \ast)  \nonumber\\
\end{align*} \nonumber
Here in ($\ast \ast \ast$) we must note that $\triangle x$ is related near $O+$ only to $b_i$ in the left hand side regardless of $b_j$ and so is in the right hand side regardless of $b_j$ because of equality of $( \ast \ast)$ above.
So we can make $\triangle x$ run before  $ 1 \over b_i$ near $O+$ while we can make ${ 1 \over b_j}$ run before $\triangle x$ near $O+$. 

Hence we see due to  $( \ast \ast \ast) $ that $\lim_{b_j \to \infty } g_{b_i}^{0}(x)$ is bounded in the left hand side for sufficiently small neighborhood of $O+$, while $\lim_{b_j \to \infty } g_{b_i}^{b_j}(x)$ is not bounded by dint of $(\ast),$ i.e., unbounded in the right hand side for sufficiently small neighborhood of $O+$. 

For, if $\lim_{b_j \to \infty } g_{b_i}^{b_j}(x)$ is bounded near $O+$, then it should be bounded irrelevant to the way $x$ tends to $O+$.  Of course there are $3$ possibilities  for $\lim_{b_j \to \infty } g_{b_i}^{b_j}(x)$ according to the way that $x$ tends to $O+:$ boundedness or unboundedness or indefiniteness.

 After all we meet a contradiction by the above argument.

  So we must have $Re (1-s) \leq Re (s)$.
  
   Likewise for the case 
 $Re (1-s) < Re (s)$, we also meet a contradiction 
 \begin{align*} \nonumber
 	\lim_{\delta \to 0}  \int_{0}^{\delta} \left(  {b_i  \over x^{s}}  +  {b_j  \over x^{1-s}} \right) dx  \not= 0 \   \nonumber\\
 \end{align*} \nonumber
 for some $b_i ,b_j \in \Bbb R^+$. So we must have 
 $$Re (1-s) = Re (s),$$ so that $$Re (s) = {1 \over 2}.$$
 
  Note that any linear combination of vectors in a quadrant remains in the same quadrant.
 
 For the case $Re (1-s) > Re (s),$ the resultant of a vector of $1 \over x^{1-s}$ and the corresponding vector of $1 \over x^{s}$ remains in the first quadrant on our chosen segments of $x,$ whereas for the case $Re (1-s) < Re (s)$ the resultant of a vector of  $1 \over x^{s}$ and the corresponding vector of $1 \over x^{1-s}$ remains  in the fourth quadrant. 
 
 One thing more should be remembered for our understanding of the computation of integration. 
 
 We have
 \begin{equation} \nonumber
\int_{0}^{\infty}  \ =0  \ \Longrightarrow  \ \lim_{\delta \to 0 }  \int_{0}^{\delta}  \ =0  \ \Longleftrightarrow  \ \lim_{\delta' \to 0 } \lim_{\delta \to 0 }\int_{\delta'}^{\delta}  \ =0,
 \end{equation}
 whereas we have however $\int_{0}^{\infty}  \  \not= 0$ does not necessarily imply 
$ \lim_{\delta \to 0 }  \int_{0}^{\delta}  \ = 0.$

Finally we would like to inform the readers of the fact that Godfrey Harold Hardy FRS (1877.2.7 - 1947.12.1) proved in 1914 that infinitely many zeros of $\zeta (s)$ exist on the critical line. Refer for more information to ``Sur les zer$\acute o$s de la fonction $\zeta (s)$ de Riemann" Comptes rendus hebdomadaires  des s$\acute e$ances del'Acad$\acute e$mie des sciences, 1914.
\end{proof}
We have a close relationship between prime numbers and the Riemann zeta function.
It is well known that the real valued zeta function is equal to the Euler product. Using this is very helpful for this relationship. For, we consider 
\begin{equation} \nonumber
\zeta (n) =\sum_{m=1}^{\infty} {1 \over m^n} = 1+ {1 \over 2^n} + {1 \over 3^n } + \cdots \cdots .
\end{equation}
Putting in for $n=1,$ we have the divergent harmonic series. In 1737, Euler proved that 
\begin{equation} \nonumber
\begin{split}
&\cdots \left( 1- {1 \over 13^n}\right)\left( 1-{1 \over 11^n}\right) \left( 1-{1 \over  7^n}
\right)\left( 1-{1 \over 5^n}\right) \\
& \ \ \ \  \  \ \times \left( 1-{1 \over 3^n}
\right) \left( 1-{1 \over 2^n}
\right) \ \zeta (n) =1
\end{split}
\end{equation}
 for any integer $n>1$.

The zeros of the Riemann zeta fuction is closely related to
the prime number theorem [EC],[RR].So the 8th Hilbert problem is now clearly answered by dint of this.

\

\

\bibliographystyle{amsalpha}

\begin{thebibliography}{alpha}

\bibitem [1] {1}            G.G.A.B$\ddot{a}$uerle and E.A.de kerf, Lie algebras, part I,II, North Holland ,1999.

\bibitem [2]  {2}             J.E.Humphres, Introduction to Lie algebras and Representation  theory, Springer-Verlag,1980.

\bibitem [3]  {3}           J.E.Humphreys, Modular representations of  simple Lie algebras ,Bulletin of AMS vol.35, No.2 (1998)105-122.

\bibitem [4]  {4}           A.Premet, Support varieties of nonrestricted  modules over Lie algebras of reductive groups, Journal of the London Mathematical Society        
                                    Vol.55,               part2 (1997) 236-250.

\bibitem [5]  {5} H.Strade and R.Farnsteiner, Modular Lie algebras and their representations, Marcel Dekker, 1988.



















\end{thebibliography}

\begin{thebibliography}{alpha}
	

	























































\end{thebibliography}

\end{document}